\theoremstyle{plain}
\newtheorem{thm}{Theorem}[section]
\newtheorem{cor}[thm]{Corollary}
\newtheorem{lem}[thm]{Lemma}
\theoremstyle{definition}
\newtheorem{defn}{Definition}[section]
\theoremstyle{remark}
\numberwithin{equation}{section}
\title{ \bf Generalization of Kuratowski problem in vector spaces}
\author{ Allahkaram Shafie \\
 E-mail:  shafie.allahkaram@gmail.com}
\date{}
\begin{document}
\setlength{\evensidemargin}{-0.2in}
\maketitle
%\begin{center}\setlength{\columnsep}{1.5cm}

%{\bf\large Abstract}\\
%\end{center}
\noindent {\bf\large Abstract.}In this short paper,  Kuratowski  problem will be investigated in vector space. The highest number of distinct sets that can be generated from one convex set in linear space by repeatedly applying algebraic closure and complement in any order is 8. \vspace{2mm}

\noindent {\bf Keywords:} Kuratowski problem, algebraic interior, algebraic closure.
 \section{Introduction}

 In point-set topology Kuratowsky's closure-complement problem asks for the largest number of distinct sets obtained by repeatedly applying the set operations of closure and complement to a given starting subset of a topological space. The answer is 14. This result was first published by Kazimierz Kuratowski in 1992 \cite{KK}. The problem gained wide exposure three decades later as an exercise in John L.Kelley's classic textbook General topology \cite{KJ}.\\ In this article we will see what happens when topological space $X $ and topological closure replaced by vector space (without any topology) and algebraic closure.
In this paper $X$ is a real linear space and $A$ is a convex subset of $X.$
Let $f$ denote the algebraic closure operation and let $g$ denote the complement operation. Let $W$ be the set of strings (finite ordered lists) using only $f$ 
and $g$ For a fixed subset $A$ of a linear space $X$  and $w\in W$ let 
$wA$ denote the set obtained by applying the operations listed in $w$ from right to left. For example, if 
$w=fgg$ then $wA$  is the set obtained by first taking the complement of $A$  then taking the complement of that, and then taking the closure of that. For a fixed subset $A$ of a linear space $X.$ To simplify we consider $m(A)=\{wA:~~w\in W\}.$ It is not even clear that $m(A)$
 is finite, but we can simplify matters somewhat by reducing the number of strings we need to consider
but we can simplify matters somewhat by reducing the number of strings we need to consider. In the example above, we took the complement of $A$ then took the complement of that; but that's just $A$ In general, we see that $gg$ has no effect on the set. Therefore, if $w\in W$  contains a pair $gg $ we can remove it from the string without changing the set $wA.$ We have a similar simplification for $f$ namely $ff=f$ . With these two pieces of information at our disposal, we see that we only need to consider strings where there are no consecutive $f$'s or $g$'s; from this point on, $W$   will denote the set of such strings. While these restrictions are substantial, we are still left with infinitely many strings to consider:

\[f,g,fg,gf,fgf,gfg,gfgf,\]
Let us give a simple example of a set $A$ in topological space $X$ which from 
 the operation closure-complement the fourteen different sets will be obtained. 
\begin{defn}\label{d1}\cite{ABT} Let $S$ be a nonempty subset of vector space $X.$\\ 
(a) The set $${\rm cor}(S)=\{x\in X;\forall y\in X
~\exists\bar{\lambda}>0, x+\lambda y\in S,\forall\lambda\in[0,\bar{\lambda}]\}$$ is called
 the algebraic interior of $S.$\\
 (b) The set $S$ with $S={\rm cor}S$ is called algebraic open.\\
 (c) An element $\bar{x}\in X$ is called linearly accessible from $S$, if there
is an $x\in S,~x\neq\bar{x} $, with the property $$\lambda
x+(1-\lambda)\bar{x}\in S\quad\mbox{for all}~\lambda\in[0,1]$$The
union of $S$ is called the set of all linearly accessible elements
from $S$ is called the algebraic closure of $S$ and it is denoted by
$${\rm lin} (S):= S\cup\{x\in X| x~\mbox{is linearly accessible from
S}\}.$$In the case of $S={\rm lin}(S)$ the set $S$ is called algebraically
closed.\\ (d) The set $X^{'}$ is defined to be the set of all linear
maps from $X$ to $\mathbb{R}$ and it is called the algebraic dual
space of $X.$
\end{defn}
\begin{lem}\label{l1}\cite{ABT}
For a nonempty convex subset $S$ of a linear space we have:\\
(a) ${\rm cor}( {\rm cor}(S))={\rm cor}(S)$\\
(b) ${\rm cor}(S)\neq\emptyset\Rightarrow {\rm lin}({\rm cor}(S))={\rm lin}(S)$ and
${\rm cor}({\rm lin}(S))={\rm cor}(S)$
\end{lem}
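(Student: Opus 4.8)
The engine for the whole statement is a single \emph{line segment principle} for the algebraic interior, so I would establish that first and then read off (a) and the two identities in (b) as consequences. Throughout I record the elementary facts that follow directly from Definition \ref{d1}: putting $y=0$ shows ${\rm cor}(S)\subseteq S$, convexity of $S$ gives $S\subseteq{\rm lin}(S)$, and both operators are monotone for inclusion (if $A\subseteq B$ then ${\rm cor}(A)\subseteq{\rm cor}(B)$ and ${\rm lin}(A)\subseteq{\rm lin}(B)$). I read Definition \ref{d1}(c) in the usual way, with the accessing segment running over $\lambda\in(0,1]$ so that the point $\bar x$ itself is excluded, i.e. ${\rm lin}(S)=S\cup\{\bar x:\exists\,s\in S,\ [s,\bar x)\subseteq S\}$. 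The basic step is: \emph{if $S$ is convex, $x_0\in{\rm cor}(S)$ and $x_1\in S$, then $p_t:=(1-t)x_0+tx_1\in{\rm cor}(S)$ for every $t\in[0,1)$.} To see this I fix a direction $y$ and, for small $\rho>0$, write $p_t+\rho y=(1-t)\big(x_0+\tfrac{\rho}{1-t}y\big)+t\,x_1$; since $x_0\in{\rm cor}(S)$ the first vertex lies in $S$ once $\rho$ is small, so convexity puts $p_t+\rho y$ in $S$ for all $\rho$ in some $[0,\mu]$, and as $y$ is arbitrary $p_t\in{\rm cor}(S)$. Part (a) is then immediate: ${\rm cor}({\rm cor}(S))\subseteq{\rm cor}(S)$ comes from ${\rm cor}(S)\subseteq S$, while for the reverse inclusion a point $x\in{\rm cor}(S)$ and a direction $y$ give $x_1:=x+\mu y\in S$, and the principle places $[x,x_1)$ inside ${\rm cor}(S)$, so $x\in{\rm cor}({\rm cor}(S))$.

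The part that needs real work, and which I expect to be the main obstacle, is upgrading the principle so that the \emph{far} endpoint need only lie in ${\rm lin}(S)$: \emph{if $x_0\in{\rm cor}(S)$ and $x_1\in{\rm lin}(S)$, then $[x_0,x_1)\subseteq{\rm cor}(S)$.} A single convex combination with $x_1$ is unavailable because $x_1\notin S$ in general, and with no topology I cannot pass to a limit. The device I would use is a three-point convex combination. Fix $t\in(0,1)$, set $p=(1-t)x_0+tx_1$ and fix a direction $y$; using $x_1\in{\rm lin}(S)$ pick $s\in S$ with $c_\lambda:=(1-\lambda)s+\lambda x_1\in S$ for $\lambda\in[0,1)$, so that $x_1-c_\lambda=(1-\lambda)(x_1-s)$ is a controllable multiple of the fixed vector $x_1-s$. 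Because $x_0\in{\rm cor}(S)$ there are $\delta_1,\delta_2>0$ with $x_0+\sigma y\in S$ for $\sigma\in[0,\delta_1]$ and $x_0+\sigma(x_1-s)\in S$ for $\sigma\in[0,\delta_2]$. I then write $p+\rho y=t\,c_\lambda+\tfrac{1-t}{2}\big(x_0+\eta_1(x_1-s)\big)+\tfrac{1-t}{2}\big(x_0+\eta_2 y\big)$ with $\eta_1=\tfrac{2t(1-\lambda)}{1-t}$ and $\eta_2=\tfrac{2\rho}{1-t}$; choosing $\lambda$ close enough to $1$ forces $\eta_1\le\delta_2$, and taking $\rho\le\mu:=\tfrac{(1-t)\delta_1}{2}$ forces $\eta_2\le\delta_1$, so all three vertices lie in $S$ and convexity gives $p+\rho y\in S$, whence $p\in{\rm cor}(S)$. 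The essential trick is that the error vector is distributed over two \emph{separate} vertices, each absorbed by the core property in a single fixed direction, so a simultaneous neighbourhood in two directions is never needed — this is exactly what keeps the argument valid without any topology.

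With this strengthened principle both identities in (b) follow, using ${\rm cor}(S)\neq\emptyset$ to fix $x_0\in{\rm cor}(S)$. For ${\rm lin}({\rm cor}(S))={\rm lin}(S)$ the inclusion $\subseteq$ is monotonicity together with ${\rm cor}(S)\subseteq S$; for $\supseteq$ take $\bar x\in{\rm lin}(S)$, and if $\bar x\neq x_0$ the strengthened principle gives $[x_0,\bar x)\subseteq{\rm cor}(S)$, so $\bar x$ is linearly accessible from ${\rm cor}(S)$ via $x_0$ and hence lies in ${\rm lin}({\rm cor}(S))$ (the case $\bar x=x_0$ being trivial). For ${\rm cor}({\rm lin}(S))={\rm cor}(S)$ the inclusion $\supseteq$ is monotonicity with $S\subseteq{\rm lin}(S)$; for $\subseteq$ take $\bar x\in{\rm cor}({\rm lin}(S))$, and if $\bar x\neq x_0$ apply the core property of $\bar x$ in the direction $\bar x-x_0$ to obtain $\lambda_0>0$ with $x_1:=\bar x+\lambda_0(\bar x-x_0)\in{\rm lin}(S)$. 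Then $\bar x=(1-\theta)x_0+\theta x_1$ with $\theta=\tfrac{1}{1+\lambda_0}\in(0,1)$, so the strengthened principle yields $\bar x\in{\rm cor}(S)$, completing the proof.
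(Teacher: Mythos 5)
The paper never proves this lemma --- it is stated with a citation to \cite{ABT} and used as a black box --- so there is no in-paper proof to compare yours against; I am therefore judging your argument on its own merits. It is correct. The preliminary facts (${\rm cor}(S)\subseteq S$, monotonicity of both operators), the basic segment principle, and its use in part (a) are all carried out properly. The heart of your proof, the strengthened principle with far endpoint in ${\rm lin}(S)$, is also right: the three-point identity $p+\rho y=t\,c_\lambda+\tfrac{1-t}{2}\bigl(x_0+\eta_1(x_1-s)\bigr)+\tfrac{1-t}{2}\bigl(x_0+\eta_2 y\bigr)$ does expand to exactly $p+\rho y$, its coefficients are positive and sum to $1$, and your choices of $\lambda$ and $\mu$ put all three vertices in $S$, so convexity finishes it; the subsequent derivations of both identities in (b) (accessibility of $\bar x$ from $x_0$ for the first, and $\bar x=(1-\theta)x_0+\theta x_1$ with $\theta=1/(1+\lambda_0)$ for the second) are sound. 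Note, though, that this is in substance the classical line-segment argument for the algebraic interior, which is also how the cited reference establishes the lemma, so your route is the expected one rather than a genuinely different one; its value here is that it supplies the details the paper entirely omits. Two minor points: your reading of Definition \ref{d1}(c) with $\lambda\in(0,1]$ is the right correction of the paper's ``$\lambda\in[0,1]$'' (which, taken literally, would force ${\rm lin}(S)=S$ and trivialize everything); and in the strengthened principle you should say a word about the case $x_1\in S$, where $x_1$ need not be linearly accessible from $S$ --- taking $s=x_1$ (so that $c_\lambda=x_1$ and $x_1-s=0$), or simply invoking your basic principle, covers it.
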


\begin{thm}\label{t1}\cite{ABT}
Let $S$ and $T$ be nonempty convex subsets of a real linear space
$X$ with ${\rm cor}(S)\neq\emptyset.$ Then ${\rm cor}(S)\cap T=\emptyset$ if and
only if there are a linear functional $l\in
X^{'}\setminus\{0_{X^{'}}\}$ and a real number $\alpha$ with
$$l(s)\leq \alpha\leq l(t),~~\forall s\in S, ~t\in T$$ and $$l(s)<\alpha,~~\forall s\in {\rm cor}(S)$$
\end{thm}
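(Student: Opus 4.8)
The plan is to read Theorem \ref{t1} as the algebraic analogue of the geometric Hahn--Banach separation theorem, with the Minkowski gauge of a convex set having nonempty algebraic interior playing the role an open convex neighbourhood plays in the topological proof; the hypothesis ${\rm cor}(S)\neq\emptyset$ is precisely what supplies such a gauge. The ``if'' direction is immediate: if $l$ and $\alpha$ exist, then every $s\in{\rm cor}(S)$ satisfies $l(s)<\alpha\le l(t)$ for all $t\in T$, so no point of ${\rm cor}(S)$ can lie in $T$, i.e. ${\rm cor}(S)\cap T=\emptyset$.

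For the ``only if'' direction assume ${\rm cor}(S)\cap T=\emptyset$. First I would reduce to separating a single convex set from the origin by setting
\[ C:={\rm cor}(S)-T=\{a-t:\ a\in{\rm cor}(S),\ t\in T\}. \]
This $C$ is convex as a difference of convex sets, and $0\notin C$, because $0\in C$ would force a common point of ${\rm cor}(S)$ and $T$. Crucially, $C$ is algebraically open: by Lemma \ref{l1}(a) the set ${\rm cor}(S)$ is its own core, so each translate ${\rm cor}(S)-t$ is algebraically open, and $C=\bigcup_{t\in T}({\rm cor}(S)-t)$ is a union of algebraically open sets; a direct check from the definition of ${\rm cor}$ shows any such union is again algebraically open.

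Next I would manufacture the functional from the gauge of $C$. Fix $c_0\in C$ and set $K:=C-c_0$, so that $0\in{\rm cor}(K)=K$, and let $p(x)=\inf\{t>0:\ x/t\in K\}$ be its Minkowski gauge. Since $0$ is a core point of $K$, $p$ is finite and sublinear, and since $K$ is algebraically open one checks $K=\{x:\ p(x)<1\}$. As $0\notin C$ we have $-c_0\notin K$, hence $p(-c_0)\ge1$. Setting $l_0(-c_0)=p(-c_0)$ defines a linear functional on ${\rm span}\{c_0\}$ that is dominated by $p$ there, so the algebraic Hahn--Banach theorem extends it to $l\in X'$ with $l\le p$ on $X$. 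Then $l(-c_0)=p(-c_0)\ge1$ shows $l\neq0$, and for $x\in C$ we get $l(x-c_0)\le p(x-c_0)<1$, whence $l(x)<1+l(c_0)=1-p(-c_0)\le0$. In particular $l(a)<l(t)$ for all $a\in{\rm cor}(S)$ and all $t\in T$.

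It remains to package this as the asserted inequalities and to note where the real difficulty lies. Put $\alpha:=\sup_{a\in{\rm cor}(S)}l(a)$; then $\alpha\le l(t)$ for every $t\in T$. For the strict inequality, choose $y$ with $l(y)>0$ (possible since $l\neq0$): given $s\in{\rm cor}(S)$, Lemma \ref{l1}(a) yields $s+\lambda y\in{\rm cor}(S)$ for some small $\lambda>0$, so $\alpha\ge l(s)+\lambda l(y)>l(s)$. To reach all of $S$, Lemma \ref{l1}(b) gives ${\rm lin}({\rm cor}(S))={\rm lin}(S)\supseteq S$, so any $s\in S$ is linearly accessible from some core point $a$; applying $l$ along the segment $\lambda a+(1-\lambda)s\in{\rm cor}(S)$ and letting $\lambda\to0^+$ gives $l(s)\le\alpha$. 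This delivers $l(s)\le\alpha\le l(t)$ together with strictness on ${\rm cor}(S)$. I expect the main obstacle to be the middle step: constructing a finite sublinear gauge and invoking Hahn--Banach without any topology. Everything depends on ${\rm cor}(S)\neq\emptyset$, which is exactly what guarantees $C$ has a core point and hence a usable gauge; the auxiliary facts that unions of algebraically open sets are algebraically open and that the weak inequality propagates from ${\rm cor}(S)$ to $S$ via linear accessibility are the places where Lemma \ref{l1} must be applied with care.
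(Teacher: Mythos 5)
The paper offers no proof of this statement: Theorem \ref{t1} is imported by citation from \cite{ABT} and used as a black box, so there is no internal argument to compare yours against. Judged on its own merits, your proof is correct, and it is in substance the classical argument behind this result (Eidelheit-type separation via the Minkowski gauge and the algebraic Hahn--Banach theorem), which is also how the cited source establishes it. The key steps all check out: the reduction to $C={\rm cor}(S)-T$ with $0\notin C$; the observation that $C$ is algebraically open as a union of translates of ${\rm cor}(S)$ (using Lemma \ref{l1}(a)); finiteness and sublinearity of the gauge $p$ of $K=C-c_0$ from $0\in{\rm cor}(K)$; the identity $K=\{x:\,p(x)<1\}$ from algebraic openness; the one-dimensional domination $l_0\le p$ on ${\rm span}\{c_0\}$ and its Hahn--Banach extension; and the final passage from ${\rm cor}(S)$ to all of $S$ via ${\rm lin}({\rm cor}(S))={\rm lin}(S)$ (Lemma \ref{l1}(b)) together with a limit along the accessibility segment. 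Two points you use silently and should state explicitly: first, ${\rm cor}(S)$ is itself convex when $S$ is convex (this is needed both for $C$ to be convex and for Lemma \ref{l1}(a) to apply to it); second, linear accessibility must be read with $\lambda\in(0,1]$ rather than the paper's misprinted $\lambda\in[0,1]$, since the latter would force ${\rm lin}(S)=S$ and make your limiting step vacuous. Neither is a gap of substance.
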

\begin{cor}\label{c1}
Let $S$ be a nonempty and convex subset of a real linear space $X.$
Then $x\notin {\rm cor}(S)$ if and only if there are a linear functional
$l\in X^{'}\setminus\{0_{X^{'}}\}$ and a real number $\alpha$ with
$$l(s)\leq \alpha\leq l(x),~~\forall s\in S$$ and $$l(s)<\alpha,~~\forall s\in {\rm cor}(S)$$
\end{cor}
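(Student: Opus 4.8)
The plan is to obtain Corollary \ref{c1} as the special case of Theorem \ref{t1} in which the second convex set is the singleton $T=\{x\}$. A one-point set is automatically nonempty and convex, so $T=\{x\}$ is an admissible choice in Theorem \ref{t1}. With this choice the hypothesis ${\rm cor}(S)\cap T=\emptyset$ reads exactly $x\notin{\rm cor}(S)$, while the separation chain $l(s)\le\alpha\le l(t)$ for all $t\in T$ collapses to $l(s)\le\alpha\le l(x)$, and the strict inequality $l(s)<\alpha$ on ${\rm cor}(S)$ is unchanged. Thus the two conditions of the corollary are precisely the specializations of the two conditions of the theorem, and the proof should amount to checking that this specialization is legitimate.

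For the forward implication I would argue as follows. Assume $x\notin{\rm cor}(S)$, which is the same as ${\rm cor}(S)\cap\{x\}=\emptyset$. Applying the ``only if'' part of Theorem \ref{t1} to the pair $S,\{x\}$ produces a functional $l\in X^{'}\setminus\{0_{X^{'}}\}$ and a scalar $\alpha$ with $l(s)\le\alpha\le l(x)$ for all $s\in S$ and $l(s)<\alpha$ for all $s\in{\rm cor}(S)$, which is exactly the desired conclusion.

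For the converse I prefer a short self-contained argument rather than invoking the other direction of Theorem \ref{t1}. Suppose such $l$ and $\alpha$ exist and, for contradiction, that $x\in{\rm cor}(S)$. Then the strict inequality valid on ${\rm cor}(S)$ applies to $s=x$, giving $l(x)<\alpha$; but the first chain gives $\alpha\le l(x)$, whence $l(x)<\alpha\le l(x)$, an absurdity. Hence $x\notin{\rm cor}(S)$. Note that this direction uses no structural input at all, only the formal clash of a strict and a non-strict inequality.

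The only genuine point requiring care is the standing hypothesis ${\rm cor}(S)\neq\emptyset$ demanded by Theorem \ref{t1}: the forward direction needs it to run the separation argument, so the corollary is naturally read under (or reduces to the interesting case) ${\rm cor}(S)\neq\emptyset$, the degenerate case ${\rm cor}(S)=\emptyset$ being either excluded by convention or handled separately. I therefore expect the main---indeed essentially the only---obstacle to be bookkeeping: confirming that $\{x\}$ satisfies the hypotheses imposed on $T$ and that the empty-interior situation does not quietly undermine the forward implication. All the substantive analytic content is carried by Theorem \ref{t1} itself.
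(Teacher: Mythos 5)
The paper offers no written proof of this corollary at all---it is stated as an immediate specialization of Theorem \ref{t1} with $T=\{x\}$, which is exactly your argument (including the trivial strict-versus-nonstrict contradiction for the converse), so you have reproduced the intended route. Your closing caveat is also well placed: the paper's statement of Corollary \ref{c1} silently drops the hypothesis ${\rm cor}(S)\neq\emptyset$ that Theorem \ref{t1} requires, and the forward implication genuinely needs it, so this is a gap in the paper's statement rather than in your proof.
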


\section{Main results}
\begin{lem}\label{l2}
If $A\subset X$ be convex set then $X\setminus {\rm cor}(S)={\rm lin}(X\setminus
S)$
\end{lem}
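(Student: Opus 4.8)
The plan is to establish the set equality $X\setminus{\rm cor}(S)={\rm lin}(X\setminus S)$ by proving the two inclusions separately, working directly from the definitions in Definition~\ref{d1} together with the convexity of $S$. This is the algebraic counterpart of the familiar topological identity $X\setminus{\rm int}(S)=\overline{X\setminus S}$, and in each direction I will split into the cases $x\notin S$ (where the containment is immediate) and $x\in S$ (which is where the real work lies). Throughout I use the elementary fact that ${\rm cor}(S)\subseteq S$, obtained by taking $\lambda=0$ in the definition of ${\rm cor}$.

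For the inclusion $X\setminus{\rm cor}(S)\subseteq{\rm lin}(X\setminus S)$, let $x\notin{\rm cor}(S)$. If $x\notin S$ then $x\in X\setminus S\subseteq{\rm lin}(X\setminus S)$ and we are done. Suppose instead $x\in S$. Negating the definition of ${\rm cor}$ produces a direction $y\in X$ (necessarily $y\neq 0$) such that every $\bar\lambda>0$ admits some $\lambda\in[0,\bar\lambda]$ with $x+\lambda y\notin S$; that is, points leaving $S$ occur arbitrarily close to $x$ along $y$. The crux is to upgrade this to the statement that the \emph{entire} open ray leaves $S$: if $x+\lambda_0 y\in S$ held for some $\lambda_0>0$, then convexity of $S$ (with $x\in S$) would force $x+\lambda y\in S$ for all $\lambda\in[0,\lambda_0]$, contradicting the witness at $\bar\lambda=\lambda_0$. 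Hence $x+\lambda y\notin S$ for all $\lambda>0$. Setting $z=x+y\in X\setminus S$, the relation $\mu z+(1-\mu)x=x+\mu y$ shows the segment from $x$ to $z$ with $x$ excluded lies in $X\setminus S$, so $x$ is linearly accessible from $X\setminus S$ and thus $x\in{\rm lin}(X\setminus S)$.

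For the reverse inclusion ${\rm lin}(X\setminus S)\subseteq X\setminus{\rm cor}(S)$, let $x\in{\rm lin}(X\setminus S)$. If $x\in X\setminus S$ then, since ${\rm cor}(S)\subseteq S$, we have $x\notin{\rm cor}(S)$. Otherwise $x\in S$ is linearly accessible from $X\setminus S$, so there is $z\in X\setminus S$ with $z\neq x$ and $\mu z+(1-\mu)x\notin S$ for all $\mu\in(0,1]$. Reading off the direction $y=z-x\neq 0$, the identity $x+\mu y=\mu z+(1-\mu)x$ shows $x+\mu y\notin S$ for every $\mu\in(0,1]$; taking $\lambda=\min(\bar\lambda,1)$ for any prescribed $\bar\lambda>0$ then exhibits a point of the initial segment outside $S$, which is exactly the failure of the defining condition for $x\in{\rm cor}(S)$. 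Hence $x\notin{\rm cor}(S)$.

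I expect the main obstacle to be the convexity upgrade in the second paragraph: the negation of membership in ${\rm cor}(S)$ only supplies bad points arbitrarily near $x$, whereas linear accessibility demands an entire half-open segment inside $X\setminus S$, and it is precisely convexity of $S$ (used through $x\in S$) that bridges this gap. A secondary point requiring care is the exact form of linear accessibility in Definition~\ref{d1}(c)---namely that the parameter should run over $(0,1]$ so that the accessible point $x$ itself need not lie in $X\setminus S$---since a careless reading makes the cases $x\in S$ and $x\notin S$ collide. Should a separation-based argument be preferred, Corollary~\ref{c1} gives an alternative route to the first inclusion, converting $x\notin{\rm cor}(S)$ into a functional $l\in X'\setminus\{0\}$ and level $\alpha$ separating $x$ from ${\rm cor}(S)$ from which a witness direction can be extracted, but the direct convexity argument above is shorter and avoids invoking the algebraic dual.
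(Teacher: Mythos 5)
Your proof is correct, but it takes a genuinely different route from the paper's. The paper proves only the inclusion $X\setminus{\rm cor}(S)\subseteq{\rm lin}(X\setminus S)$ (declaring the reverse trivial), and does so by contradiction via separation: assuming $x\notin{\rm cor}(S)$ and $x\notin{\rm lin}(X\setminus S)$, it invokes Corollary~\ref{c1} to get $l\in X'\setminus\{0_{X'}\}$ and $\alpha$ with $l\leq\alpha$ on $S$ and $\alpha\leq l(x)$, then uses the negation of linear accessibility to conclude $l(y)\leq\alpha$ also for every $y\in X\setminus S$, so that $l$ is bounded above on all of $X$ and hence $l=0$ by homogeneity, a contradiction. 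You instead argue both inclusions directly from Definition~\ref{d1}, with convexity entering only in the upgrade step: a failing direction $y$ gives points outside $S$ arbitrarily close to $x$, and convexity together with $x\in S$ pushes the entire open ray $x+\lambda y$, $\lambda>0$, out of $S$. The trade-off favors your argument. The paper's appeal to Corollary~\ref{c1} is delicate, since that corollary is deduced from Theorem~\ref{t1}, which requires ${\rm cor}(S)\neq\emptyset$ --- a hypothesis never verified in the lemma and genuinely violable (a convex cone in an infinite-dimensional space can have empty algebraic interior while admitting no nonzero linear functional bounded above on it, in which case the ``only if'' half of Corollary~\ref{c1} fails). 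Your argument uses no dual-space machinery, covers the case ${\rm cor}(S)=\emptyset$ without change, and is therefore both more elementary and more general; you also rightly flag that Definition~\ref{d1}(c) must be read with parameter range $(0,1]$ rather than $[0,1]$, since under the literal reading ${\rm lin}(S)=S$ and the lemma trivializes.
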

\begin{proof}
The $\supseteq$ is trivial so we prove reciprocal  that is $X\setminus
{\rm cor}(S)\subset {\rm lin}(X\setminus S)$ and suppose that contrary $x\in X\setminus
{\rm cor}(S)$  and $x\notin{\rm lin}(X\setminus S).$ By corollary \eqref{c1} there exists $l\in
X^{'}\setminus\{0_{X^{'}}\}$ and a real number $\alpha$ with
\begin{eqnarray}\label{r1}l(s)\leq \alpha\leq l(x),~~\forall s\in S \end{eqnarray} and
$$l(s)<\alpha,~~\forall s\in {\rm cor}(S)$$  then $x\in S$ and for all $y\neq x, y\in
X\setminus S$ there exists $\lambda\in( 0,1]$ such that $\lambda
y+(1-\lambda)x\in S$ so by \eqref{r1} one has $l(x)=\alpha$ and
\begin{eqnarray*}l(\lambda y+(1-\lambda)x)=\lambda l(y)+(1-\lambda)l(x)=\lambda l(y)+(1-\lambda)\alpha\leq
\alpha\end{eqnarray*} therefore $l(y)\leq \alpha$ hence adding by \eqref{r1} derives that,  for all $x\in X,$ $l(x)\leq \alpha$ thus
$$l(x)\leq \frac{\alpha}{n},~~ \forall n\in\mathbb{N}$$ which
implies that $l(x)=0$
 but this is contradiction because of $l\neq 0.$ \end{proof}
 \begin{thm}
 Consider the collection all of convex subsets $A$ of the linear space $X.$
 The operator of algebraic closure $A\longrightarrow {\rm lin}(A)$ and
 complementation $A\longrightarrow X-A$ are functions from this
 collection of itself. then starting with a given set $A,$ one can
 form no more than $8$ distinct set by applying these two operations
 successively.
 \end{thm}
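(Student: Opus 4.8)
The plan is to pass from arbitrary strings to a short normal form and then to exhibit an explicit list of at most eight sets that is closed under both operations. Since $gg$ is the identity and $ff=f$, every set $wA$ equals $w'A$ for some \emph{alternating} word $w'$ (no two consecutive $f$'s or $g$'s), so it suffices to bound the sets produced by the words $\varepsilon, f, g, fg, gf, fgf, gfg, gfgf, fgfg,\dots$. The engine of the whole argument is Lemma \ref{l2}, which I would rewrite in operator form: for \emph{convex} $S$ one has $fg(S)=X\setminus{\rm cor}(S)$, and hence (using $gg=\mathrm{id}$) $gfg(S)={\rm cor}(S)$. Combined with Lemma \ref{l1} this converts every alternating word applied to the convex set $A$ into an expression built only from ${\rm lin}$ and ${\rm cor}$, which can then be simplified.

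First I would record the short words. Writing $B={\rm lin}(A)$, which is again convex, Lemma \ref{l2} gives $gfg\,A={\rm cor}(A)$, $fg\,A=X\setminus{\rm cor}(A)$, $fgf\,A=fg(B)=X\setminus{\rm cor}(B)$ and $gfgf\,A={\rm cor}(B)={\rm cor}({\rm lin}\,A)$; note that every application of Lemma \ref{l2} here is to one of the \emph{convex} sets $A$ or $B$, exactly as the lemma requires. This produces the candidate list
\[
A,\quad {\rm lin}(A),\quad X\setminus A,\quad X\setminus{\rm lin}(A),\quad X\setminus{\rm cor}(A),\quad X\setminus{\rm cor}({\rm lin}(A)),\quad {\rm cor}(A),\quad {\rm cor}({\rm lin}(A)),
\]
the eight sets coming from the eight words $\varepsilon,f,g,gf,fg,fgf,gfg,gfgf$. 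The only other alternating word of length $\le 4$ is $fgfg$, and $fgfg\,A={\rm lin}({\rm cor}(A))$ is already in the list (it is ${\rm lin}(A)$ or $\emptyset$), so it yields nothing new.

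Next I would show no further set arises, i.e.\ every word of length $\ge 5$ reproduces one of these eight. Because the operators act on sets, it is enough to check the two length-$5$ words $fgfgf$ and $gfgfg$ and then induct on length: a word of length $n\ge 6$ is $x$ (with $x\in\{f,g\}$) prepended to an alternating word $w''$ of length $n-1\ge 5$, and once $w''A$ is known to equal $vA$ for a short word $v$, then $wA=x(vA)=(xv)A$ reduces as well. For the two base cases I would invoke Lemma \ref{l1}(a), ${\rm cor}({\rm cor}\,S)={\rm cor}\,S$, together with the idempotence-type identities ${\rm lin}({\rm cor}\,S)={\rm lin}\,S$ and ${\rm cor}({\rm lin}\,S)={\rm cor}\,S$ of Lemma \ref{l1}(b): one computes $fgfgf\,A={\rm lin}\big({\rm cor}({\rm lin}\,A)\big)$ and $gfgfg\,A=X\setminus{\rm lin}({\rm cor}(A))$, both of which should collapse into the list.

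The hard part is precisely that Lemma \ref{l1}(b) is conditional on a nonempty core, so the naive reductions can fail, and this is exactly where the count \emph{eight} (rather than six) comes from. I would split on whether ${\rm cor}({\rm lin}\,A)=\emptyset$. If ${\rm cor}({\rm lin}\,A)\neq\emptyset$, applying Lemma \ref{l1}(b) to the convex set $B={\rm lin}\,A$ gives ${\rm lin}({\rm cor}\,B)={\rm lin}\,B$ and ${\rm cor}({\rm lin}\,B)={\rm cor}\,B$, which force $fgfgf\,A={\rm lin}\,A=f\,A$ and $gfgfg\,A\in\{X\setminus{\rm lin}(A),\,X\setminus{\rm cor}(A)\}$, so the orbit closes on the eight sets above. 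If instead ${\rm cor}({\rm lin}\,A)=\emptyset$, then by monotonicity of ${\rm cor}$ also ${\rm cor}(A)=\emptyset$, whence Lemma \ref{l2} gives $fg\,A=fgf\,A=X$ and $gfg\,A=gfgf\,A=\emptyset$; the list degenerates to $\{A,\,{\rm lin}(A),\,X\setminus A,\,X\setminus{\rm lin}(A),\,X,\,\emptyset\}$ (at most six sets), and every longer word again lands here. In either case at most eight distinct sets occur, which is the assertion. The essential subtleties to get right are the empty-core bookkeeping that distinguishes these two regimes and the check that Lemma \ref{l2} is only ever invoked on the convex sets $A$ and ${\rm lin}(A)$.
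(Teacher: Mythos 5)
Your route is at bottom the same as the paper's -- use Lemma \ref{l2} to turn $fg$ into ``complement of the core'' and Lemma \ref{l1} to collapse the two alternating chains -- and you correctly isolated the subtlety that the paper passes over in silence: Lemma \ref{l1}(b) is conditional on a nonempty core, so the collapsing identities are not free. However, your patch does not close the hole, because the case split is on the wrong set. In Case 1 you assume only ${\rm cor}({\rm lin}(A))\neq\emptyset$; applying Lemma \ref{l1}(b) to $B={\rm lin}(A)$ legitimately gives $fgfgf\,A={\rm lin}({\rm cor}(B))={\rm lin}(B)={\rm lin}({\rm lin}(A))$, but the last step you need, ${\rm lin}({\rm lin}(A))={\rm lin}(A)$, is precisely idempotence of algebraic closure at $A$, and Lemma \ref{l1}(b) delivers that only when ${\rm cor}(A)\neq\emptyset$. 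The regime ${\rm cor}(A)=\emptyset$ but ${\rm cor}({\rm lin}(A))\neq\emptyset$ is nonvacuous (the lexicographic cone in $\mathbb{R}^{(\mathbb{N})}$ has empty core, yet its algebraic closure is the whole space), and it slips through both of your cases. The same unproved identity $ff=f$ also underlies your very first move (reduction of arbitrary words to alternating ones) and your Case 2 claim that ``every longer word again lands here'': for convex sets with empty core, ${\rm lin}$ need not be idempotent, so $ffA$ can escape your lists.

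This gap is not repairable, because the statement fails exactly there. Let $X$ have a coordinate $x_0$ and, for each $n\geq 1$, an infinite block of coordinates; let $L_n$ be the lexicographic cone of block $n$ (vectors supported in block $n$ whose last nonzero entry is positive, together with $0$), and put
$$A={\rm conv}\Big(\bigcup_{n\geq 1}\big\{(1/n)e_0+y:\ y\in L_n\setminus\{0\}\big\}\Big).$$
One checks that ${\rm cor}(A)=\emptyset$ (push a fresh coordinate of a used block negative); that ${\rm lin}(A)=\{x:0<x_0\leq 1\}$ (the lex blocks make every such point accessible, while accessibility forces $x_0\geq 1/\max(\mbox{support used})>0$); hence ${\rm cor}({\rm lin}(A))=\{x:0<x_0<1\}\neq\emptyset$, and yet ${\rm lin}({\rm lin}(A))=\{x:0\leq x_0\leq 1\}\neq{\rm lin}(A)$. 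For this $A$ the eight sets $A$, $gA$, $fA$, $gfA$, $fgA=X$, $gfgA=\emptyset$, $fgfA$, $gfgfA$ are already pairwise distinct, and $fgfgf\,A={\rm lin}({\rm cor}({\rm lin}(A)))=\{x:0\leq x_0\leq 1\}$ is a ninth set, its complement a tenth. So your Case 1 conclusion $fgfgf\,A=fA$ is not merely unproved but false. For comparison, the paper's own proof breaks at the same word: it writes $fgfgfA=fhfA=fA$, invoking Lemma \ref{l1}(b) with no core hypothesis whatsoever, so your attempt is in fact the more careful of the two; the honest conclusion is that the theorem needs an added hypothesis such as ${\rm cor}(A)\neq\emptyset$, under which both your argument and the paper's go through.
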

\begin{proof}
The notations algebraic closure,complement and algebraic interior will be denoted, respectively by $f,g,h.$ 
  According to Lemma \eqref{l1} $(b)$ we have $fhA=fA,hfA=hA$ which implies that $ffA=fA.$
  By using Lemma \eqref{l2} we have $hA=gfgA$ and since $fA$ is convex also one has $hfA=gfgfA,$ which implies that
  \[\begin{array}{l}
 A \to f \to gfA \to fgfA \to gfgfA \to fgfgfA = fhfA = fA,... \\ 
 A \to gA \to fgA \to gfgA \to fgfgA = fhA = fA,... \\ 
 \end{array}\]
  hence by given any subset $A$ of linear space $X$ there are at most $8$ distinct sets that can be produced
by taking algebraic closures and complements of A as
$$A,gA,fA,gfA,fgA,gfgA,fgfA,gfgfA.$$

\end{proof}


\begin{thebibliography}{99}

\bibitem{AB1} James R.Munkres A First Course General Topology (1974).
\bibitem{ABT}Johannes. Jahn, 
Vector Optimization and Application Theory Second Edition , Springer-Verlag Berlin Heidelberg
2011. 
\bibitem{KK} Kuratowski. Kazimierz , Fundamenta Mathematicae Warsaw , Polish Academy of Sciences (1992).
\bibitem{KJ} Kelley.John, General Topology (1955).
\end{thebibliography}
\end{document}